
\documentclass[smallextended]{svjour3}%
\usepackage{textcomp}
\usepackage{times}
\usepackage{hyperref}
\usepackage{amsmath}
\usepackage{amsfonts}
\usepackage{amssymb}
\usepackage{graphicx}%
\setcounter{MaxMatrixCols}{30}
\hfuzz1pc


\numberwithin{equation}{section}

\begin{document}
\title{Diameter bounded equal measure partitions of Ahlfors regular metric measure spaces}
\author{Giacomo Gigante \and Paul Leopardi}
\institute{Giacomo Gigante \at University of Bergamo \email{giacomo.gigante@unibg.it} \and
           Paul Leopardi \at University of Newcastle, Australia \email{paul.leopardi@gmail.com}} 
\journalname{Discrete and Computational Geometry}
\maketitle

\begin{abstract}
The algorithm devised by Feige and Schechtman for partitioning higher
dimensional spheres into regions of equal measure and small diameter is
combined with David and Christ's construction of dyadic cubes to yield a partition
algorithm suitable to any connected Ahlfors regular metric measure space of
finite measure.
\begin{keywords}
~partition, \and measure, \and diameter, \and Ahlfors regular, \and metric measure space.
\end{keywords}
\begin{subclass}
~Primary 52C22; \and Secondary 11K38, \and 28A75, \and 54E45, \and 65D30.
\end{subclass}

\end{abstract}


\authorrunning{G. GIGANTE, P. LEOPARDI}
\titlerunning{EQUAL AREA PARTITIONS OF METRIC MEASURE SPACES}

\section{Introduction}

Stolarsky \cite[p.~581]{Sto73} asserts the existence for any natural number
$N$ of a partition of the unit sphere $\mathbb{S}^{d}\subset\mathbb{R}^{d+1}$
into $N$ regions of equal volume and small diameter, { that is, diameter smaller than $\kappa N^{-1/d} $
for some fixed constant $\kappa$. 

A practical means to achieve such a partition is the recursive zonal equal
volume (EQ) sphere partitioning algorithm \cite[Section 3]{Leo07}, which extends to any dimension 
the algorithm studied in \cite{Rak94,Zho95} (see also \cite{Saf97}) for the case of the two-dimensional sphere.
In these works, particular attention is devoted to obtaining a constant $\kappa$ as small as possible.}

Feige and Schechtman
\cite{FeiS02} give a constructive proof of the following lemma, which can also
be used to prove Stolarsky's assertion.

\begin{lemma}
\label{Feige-Schechtman-original-lemma} \cite[Lemma~21, pp.~430--431]{FeiS02}
For each $0<\gamma<\pi/2$ the sphere $\mathbb{S}^{d-1}$ can be partitioned
into $N=\big(\operatorname{O}(1)/\gamma\big)^{d}$ regions of equal volume,
each of diameter at most $\gamma$.
\end{lemma}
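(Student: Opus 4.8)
My plan is to induct on the dimension, first strengthening the statement so that the number of regions may be prescribed: for every $0<\gamma<\pi/2$ and every integer $n\ge(\operatorname{O}(1)/\gamma)^{d-1}$, the sphere $\mathbb{S}^{d-1}$ admits a partition into \emph{exactly} $n$ regions of equal measure and diameter at most $\gamma$. This implies the lemma on taking $n=\lceil(\operatorname{O}(1)/\gamma)^{d}\rceil$, which for small $\gamma$ sits comfortably above the threshold. The base case $d=2$ is immediate: split the circle $\mathbb{S}^{1}$ into $n$ equal arcs, each of arclength $2\pi/n\le\gamma$, whose chords are then also at most $\gamma$.

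For the inductive step I would write each point of $\mathbb{S}^{d-1}$ as $(\cos\theta,\sin\theta\,\omega)$ with $\theta\in[0,\pi]$ and $\omega\in\mathbb{S}^{d-2}$, so that the surface measure factors as $(\sin\theta)^{d-2}\,d\theta$ against the uniform measure on $\mathbb{S}^{d-2}$. Fixing the target cell measure $v=\abs{\mathbb{S}^{d-1}}/n$, I would choose cut angles $0=\theta_{0}<\theta_{1}<\dots<\theta_{m}=\pi$ so that every collar $\{\theta_{j-1}\le\theta\le\theta_{j}\}$ has angular width at most $\gamma/2$ and measure exactly an integer multiple $n_{j}v$ of $v$; this is possible by the intermediate value theorem, since a band of angular width $\gamma/2$ carries measure at least $v$ (by the choice of threshold), so the running total can always be steered onto the next multiple of $v$ before the width budget is spent. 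Because the collar measures sum to $\abs{\mathbb{S}^{d-1}}=nv$, the counts satisfy $\sum_{j}n_{j}=n$ automatically, the number of collars is $\operatorname{O}(1)/\gamma$, and the region count $n\sim\gamma^{-d}$ is thus correct by construction.

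Each collar is then cut into $n_{j}$ cells of measure exactly $v$ in one of two ways. Near the poles, where $\sin\theta\le\gamma/2$, the ring at angle $\theta$ has extrinsic diameter $2\sin\theta\le\gamma$, so I refine only in $\theta$, taking full sub-bands each of measure $v$. In the bulk, where $\sin\theta>\gamma/2$, I invoke the inductive hypothesis to partition the factor $\mathbb{S}^{d-2}$ into $n_{j}$ equal pieces of diameter at most $\gamma/(2\sin\theta_{j})$ and sweep each piece through the collar in $\theta$. For the diameter bound, the identity $\cos\beta=1-\sin^{2}\theta\,(1-\cos\alpha)$ relating the $\mathbb{S}^{d-1}$-angle $\beta$ between two equal-$\theta$ points to the $\mathbb{S}^{d-2}$-angle $\alpha$ between their $\omega$-coordinates yields, after a short computation, $\beta\le\sin\theta\,\alpha$; hence a piece of $\mathbb{S}^{d-2}$-diameter $\gamma/(2\sin\theta_{j})$ contributes at most $\gamma/2$ in the $\omega$ direction, and the triangle inequality combines this with the $\gamma/2$ of $\theta$-width to give total diameter at most $\gamma$.

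The crux is closing the induction, that is, checking that each prescribed count $n_{j}$ is large enough to apply the inductive hypothesis on $\mathbb{S}^{d-2}$ with parameter $\gamma'=\gamma/(2\sin\theta_{j})$. With the choice $n\sim\gamma^{-d}$ one finds $n_{j}\sim(\sin\theta_{j})^{d-2}\gamma^{-(d-1)}$, whereas the hypothesis requires only $n_{j}\ge(\operatorname{O}(1)/\gamma')^{d-2}\sim(\sin\theta_{j}/\gamma)^{d-2}$; the ratio is of order $\gamma^{-1}$, uniformly in $\theta_{j}$, so the requirement holds once $\gamma$ is below a threshold depending only on $d$ (for $\gamma$ bounded away from $0$ the required number of regions is bounded and a suitable partition is easily exhibited). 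This factor $\gamma^{-1}$ is exactly the slack afforded by the exponent $d$ in place of the intrinsic $d-1$, and it is what lets the dimension-dependent $\operatorname{O}(1)$ constants accumulate harmlessly through the recursion. The remaining work, namely the integrality of the $n_{j}$, steering the final collar so that the measures close up exactly at $\theta=\pi$, and matching the two regimes across $\sin\theta=\gamma/2$, is routine bookkeeping absorbed into the $\operatorname{O}(1)$ constants.
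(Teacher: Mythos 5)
Your construction is correct in outline, but it is a genuinely different route from the one the paper relies on: the paper does not prove Lemma~\ref{Feige-Schechtman-original-lemma} itself, it cites Feige and Schechtman, whose argument (reproduced in generalized form in the proof of Theorem~\ref{Feige-Schechtman-lemma}) starts from a saturated packing of balls of radius comparable to $\gamma$, uses the associated cells as pre-regions, and redistributes the measure remainders from the leaves to the root of a spanning tree of the adjacency graph so that every cell splits into pieces of measure exactly $\mu(X)/N$. What you propose instead is the recursive zonal equal measure partition --- collars of controlled angular width whose cumulative measures are steered onto integer multiples of the target measure, with the equatorial factor $\mathbb{S}^{d-2}$ partitioned by induction --- which is essentially the EQ algorithm of \cite{Rak94,Zho95,Leo07} mentioned in the Introduction as ``a practical means to achieve such a partition''. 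Your route is explicit, handles every admissible region count $n$, and with care yields good diameter constants, but it leans entirely on the warped-product structure of the sphere in polar coordinates and so cannot be transported to the setting of this paper; the packing/tree argument uses only the two-sided ball-measure bounds and connectedness, which is exactly what allows the paper to replace Voronoi cells by dyadic cubes on a general Ahlfors regular space. Two points in your sketch deserve tightening. First, the ``slack of order $\gamma^{-1}$'' you invoke to close the induction is only present when $n\sim\gamma^{-d}$; at the inductive threshold $n\sim(C_{d}/\gamma)^{d-1}$ the powers of $\gamma$ on both sides of the requirement $n_{j}\geqslant\big(\operatorname{O}(1)\sin\theta_{j}/\gamma\big)^{d-2}$ cancel exactly, and the induction closes only because the constant $C_{d}$ may be chosen large relative to $C_{d-1}$ --- the mechanism is constant-slack, not $\gamma$-slack, so the claim that smallness of $\gamma$ is what saves you is misleading (and the large-$\gamma$ case then needs no separate treatment). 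Second, a collar may straddle the level $\sin\theta=\gamma/2$; the clean fix is to cut the polar caps at the nearest level where the cumulative measure is a multiple of $v$, which displaces the cut by $\operatorname{O}(\gamma^{2})$ in angle and only degrades the diameter bound by a constant factor, consistent with the bookkeeping you defer.
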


The Feige-Schechtman construction can be modified to give an equal volume
partition of the sphere $\mathbb{S}^{d}$ with a diameter bound of order
$\operatorname{O}(N^{-1/d})$ \cite{Leo09}. It can
also be modified to give a construction which yields an equal measure
partition of a compact connected Riemannian manifold, as suggested in \cite{Leo13}. 
The key observation is
that for such a manifold $X$ with measure $\mu(X)<\infty$, the measure of
metric balls is bounded both from below and from above. More precisely, there
are positive global constants $c_{1}<c_{2}$ and $d$ such that all open metric
balls $B(x,r)$ with $x\in X$, $0<r\leqslant\operatorname{diam}(X)$ satisfy the
bounds
\[
c_{1}r^{d}\leqslant\mu\big(B(x,r)\big)\leqslant c_{2}r^{d}.
\]
In other words, a compact connected Riemannian manifold is a special case of
an Ahlfors regular metric measure space \cite[Chapter 1]{DavS97}. Therefore,
given a positive integer $N$ sufficiently large, if $r:=(c_{1}N)^{-1/d}$ then
all balls of radius $r$ will have measure at least $N^{-1}$. The construction
on a compact manifold $X$ of finite measure therefore starts with a saturated
packing of balls of radius $r$ and then proceeds as per the construction on
the sphere. We omit the details, since our main result includes the case of 
compact connected Riemannian manifolds.

In this paper, we describe yet another modified construction that gives a
diameter bounded equal measure partition of a connected Ahlfors regular metric
measure space with a finite measure. The construction also yields a partition
containing a well separated set of points, that is each region contains a ball with
radius comparable to the diameter of the region. The main modification in the
construction is the use of David and Christ's dyadic cubes \cite{Chr90,Dav88,Dav91,DavS97} 
in place of Voronoi cells.

{ Small diameter equal measure partitions are often used to prove results related to discrepancy.
Recently, M. Skriganov \cite{Skr15} has given a constructive proof of the existence of equal measure 
partitions with small average diameter for compact $d-$rectifiable metric spaces. He has then used this 
construction to prove a generalization of Stolarsky's invariance principle. 

This type of partition is also essential in the proof of the existence of point distributions with 
small discrepancy with respect to properly chosen collections of sets 
(think, for example, of the discrepancy with respect to metric balls in compact manifolds). 
See \cite[Theorems 24A and 24D, pages 181--182]{BC} for the case of spherical cap discrepancy on the sphere, 
or \cite{BCCGT} for more general sets and metric spaces.

Some applications to numerical integration on metric spaces are described in Section \ref{numerical}.}

\section{Dyadic cubes on an Ahlfors regular space}

\begin{definition} (\cite[Chapter 1]{DavS97}, \cite[page 413]{Gro})
\label{Ahlfors-def} An Ahlfors regular metric measure space of dimension $d>0$
is a complete metric space $X$ with a Borel measure $\mu$ with the property
that there are two positive constants $c_{1}$ and $c_{2}$ such that all open
metric balls $B(x,r)$ with $x\in X$, $0<r\leqslant\operatorname{diam}(X)$
satisfy the bounds
\begin{equation}
c_{1}r^{d}\leqslant\mu\left(  B(x,r)\right)  \leqslant c_{2}r^{d}.
\label{AR-ball-bounds}%
\end{equation}

\end{definition}


\begin{definition}
\label{DC-dyadic-cube-props-def} A collection of open subsets of $X$,
$\left\{  Q_{\alpha}^{k}\subset X:k\in\mathbb{Z},\,\alpha\in I_{k}\right\}  $
is a family of dyadic cubes of $X$ if there exist three constants $\delta
\in\left(  0,1\right)  $ and $0<a_{0}\leqslant a_{1},\,$such that the following
properties hold
\begin{align}
&  \mu\left(  X\setminus\bigcup_{\alpha\in I_{k}}Q_{\alpha}^{k}\right)
=0\quad\text{for all~}k.\label{DC-prop-1}\\
&  Q_{\alpha}^{k}\cap Q_{\beta}^{k}=\emptyset\quad\text{for each }k\text{ and
}\alpha\neq\beta.\label{DC-prop-2}\\
&  \text{If~}\ell>k\text{~then either~}Q_{\beta}^{\ell}\subset Q_{\alpha}%
^{k}\text{~or~}Q_{\beta}^{\ell}\cap Q_{\alpha}^{k}=\emptyset.
\label{DC-prop-3}\\
&  \text{Each~}Q_{\alpha}^{k}\text{~contains a ball~}B(z_{\alpha}^{k}%
,a_{0}\delta^{k}).\label{DC-prop-inner-ball}\\
&  \text{Each~}Q_{\alpha}^{k}\text{~is contained in the ball~}B(z_{\alpha}%
^{k},a_{1}\delta^{k}). \label{DC-prop-outer-ball}%
\end{align}

\end{definition}

In \cite{Dav88,Dav91} (see also \cite{DavS97}), G. David shows that Ahlfors
regular metric measure spaces contained in Euclidean spaces admit a dyadic
cube decomposition. By means of Assouad embedding theorem \cite{Ass83} this
decomposition holds for general Ahlfors regular metric measure spaces. Later,
M. Christ \cite[Section 3]{Chr90} gave a direct construction of a dyadic cube
decomposition for the more general case of spaces of homogeneous type.

In fact, both David and Christ's dyadic cubes also have the property
\[
\mu(\{x\in Q_{\alpha}^{k}:\rho(x,X\setminus Q_{\alpha}^{k})\leqslant
t\delta^{k}\})\leqslant Ct^{\eta}\mu(Q_{\alpha}^{k}) \quad\forall
k,\alpha,\quad\forall t>0,
\]
which means that the mass of each cube is concentrated away from its boundary,
and its boundary has measure zero. We shall not use this property.

Finally, it is easy to see that by carefully attaching part of its boundary to each  $Q_\alpha^k,$  
property \eqref{DC-prop-1} can be replaced by the stronger property
\[
\bigcup_{\alpha\in I_{k}}Q_{\alpha}^{k}=X\quad\text{for all~}k,
\]
maintaining all other properties (of course the cubes will no longer be open in general). 

\begin{theorem}
\label{DC-thm-dyadic-cubes} (\cite[Theorem 11]{Chr90}, \cite[Lemma 11 ]{Dav88}, \cite[p. 86]{Dav91},
\cite[p. 22]{DavS97})(Dyadic cubes).
Let $X$ be an Ahlfors regular metric measure space of dimension $d.$ Then
there exists a family of dyadic cubes
as in Definition \ref{DC-dyadic-cube-props-def}.
\end{theorem}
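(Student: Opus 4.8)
The plan is to construct the dyadic cubes by a stopping-time / nested-partition argument built from maximal $\delta^k$-separated sets, following Christ's construction. First I would fix a small constant $\delta\in(0,1)$ (to be pinned down at the end in terms of the Ahlfors constants $c_1,c_2$) and, for each level $k\in\mathbb Z$, choose a maximal $\delta^k$-separated subset $\{z_\alpha^k:\alpha\in I_k\}$ of $X$; maximality guarantees that the balls $B(z_\alpha^k,\delta^k/2)$ are disjoint while the balls $B(z_\alpha^k,\delta^k)$ cover $X$. The separated centers at consecutive levels must be threaded into a tree: I would define a parent map assigning to each level-$(k{+}1)$ center $z_\beta^{k+1}$a nearest level-$k$ center $z_\alpha^k$ (breaking ties by a fixed ordering of the index sets), so that the relation ``$\beta$ is a descendant of $\alpha$'' gives the required nesting.

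With the tree in hand, I would build the cubes top-down. Given the partition at level $k$, each cube $Q_\alpha^k$ is split among its children by assigning every point of $X$ (more precisely, the cube $\widehat Q_\beta^{k+1}$ of the finer pre-partition associated with $z_\beta^{k+1}$) to its parent's cube. Concretely one sets $Q_\alpha^k=\bigcup\{Q_\beta^{k+1}:z_\beta^{k+1}\text{ is a descendant of }z_\alpha^k\}$, which forces nesting \eqref{DC-prop-3} by construction and makes \eqref{DC-prop-2} automatic once the children of a fixed parent are declared disjoint. To realize the finest level I would tessellate $X$ by assigning each point to the center $z_\alpha^k$ that is, in the tree order, the earliest nearest center; this produces disjoint sets whose closures cover $X$, giving \eqref{DC-prop-1}. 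At this stage the sets are measurable but not yet open; passing to interiors (and redistributing the measure-zero boundary, as noted in the excerpt) yields open cubes without disturbing the other properties.

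The heart of the matter, and the step I expect to be the main obstacle, is the pair of geometric inclusions \eqref{DC-prop-inner-ball} and \eqref{DC-prop-outer-ball}: I must show each $Q_\alpha^k$ contains a ball $B(z_\alpha^k,a_0\delta^k)$ and sits inside $B(z_\alpha^k,a_1\delta^k)$. The outer bound follows from a geometric-series estimate: any descendant center $z_\beta^\ell$ ($\ell\geqslant k$) lies within distance $\sum_{j\geqslant k}\delta^j=\delta^k/(1-\delta)$ of $z_\alpha^k$ by the parent map, and each finest piece has controlled diameter, so taking $a_1\asymp(1-\delta)^{-1}$ works. The inner bound is the delicate one. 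It requires showing that points very close to $z_\alpha^k$ are never stolen by a sibling cube, which hinges on a separation-versus-covering inequality. The clean way to enforce this is to run Christ's argument: prove by induction on levels that the ``bad'' boundary layer of $Q_\alpha^k$ has measure at most $C\delta^{\eta k'}\mu(Q_\alpha^k)$ for descendants, and then use the Ahlfors regularity \eqref{AR-ball-bounds} to convert measure smallness into the geometric containment of a definite ball. This is exactly where $\delta$ must be chosen small enough: the covering multiplicity of the $\delta^k$-net is bounded by a constant depending only on $c_2/c_1$ via a volume-packing argument (disjoint half-radius balls inside a fixed larger ball), and demanding $\delta^\eta$ times this multiplicity be less than $1$ closes the induction.

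Finally, I would assemble the pieces: the tree structure gives \eqref{DC-prop-3}; disjointness of children gives \eqref{DC-prop-2}; the full-measure covering at each level gives \eqref{DC-prop-1}; and the inductive boundary-layer estimate combined with Ahlfors regularity delivers both ball inclusions with $a_0,a_1$ depending only on $\delta,c_1,c_2$. Since every bound produced this way is uniform across $k$ and $\alpha$ and depends solely on the Ahlfors data, the resulting family satisfies Definition \ref{DC-dyadic-cube-props-def}, which is the assertion of the theorem. Rather than reproducing Christ's lengthy induction in full, I would cite \cite[Section 3]{Chr90} for the technical boundary-layer estimate and present the net-and-tree construction in enough detail to make the application to our setting transparent.
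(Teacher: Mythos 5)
First, be aware that the paper does not actually prove this theorem: it is quoted from the literature (David's construction for subsets of Euclidean space combined with Assouad's embedding theorem, or Christ's direct construction for spaces of homogeneous type), so there is no in-paper argument to compare against. Your overall plan --- maximal $\delta^k$-separated nets at every level, a parent map threading them into a tree, cubes read off from the descendant relation, with the technical core delegated to \cite[Section 3]{Chr90} --- is precisely the construction being cited, so your strategy is the right one.

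That said, the two steps you do try to spell out would fail as written. (i) There is no ``finest level'': $k$ ranges over all of $\mathbb{Z}$ and the nets become arbitrarily fine as $k\to+\infty$, so the bottom-up recursion $Q_\alpha^k=\bigcup_\beta Q_\beta^{k+1}$ has no base case, and your Voronoi-type tessellation cannot supply one. Christ instead defines every $Q_\alpha^k$ simultaneously, essentially as a suitable subset of the closure of the set of all descendant centres $\{z_\beta^\ell:\ell\geqslant k\}$, and then \emph{proves} \eqref{DC-prop-2} and \eqref{DC-prop-3}. (ii) The inner-ball property \eqref{DC-prop-inner-ball} is not obtained from the boundary-layer measure estimate, and your proposed conversion of ``small measure near the boundary'' into ``contains a ball of radius $a_0\delta^k$ centred at $z_\alpha^k$'' is not a valid inference: small measure of a layer says nothing about where a surviving ball sits, and the paper explicitly remarks that it does not use that estimate. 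The actual mechanism is purely metric: one builds into the parent map the rule that any $z_\gamma^{k+1}$ with $\rho(z_\gamma^{k+1},z_\alpha^k)<\tfrac12\delta^k$ must have parent $\alpha$ (such an $\alpha$ is unique by separation), and then a chaining estimate --- any level-$\ell$ net point within $\delta^\ell$ of a point of $B(z_\alpha^k,a_0\delta^k)$ has its level-$(k+1)$ ancestor within $\delta^k\left(a_0+\delta+\delta/(1-\delta)\right)<\tfrac12\delta^k$ of $z_\alpha^k$ --- forces every such net point to descend from $\alpha$. This requires only that $\delta$ be smaller than an absolute constant; the Ahlfors constants $c_1,c_2$ do not enter here. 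Since you ultimately fall back on the citation the theorem is still covered, but the portions of the argument you present in detail are not the ones that make the construction work.
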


The above construction immediately provides a partition of $X$ into $N$
regions of size approximately $N^{-1/d}$, and therefore of measure
\emph{approximately} $N^{-1}.$ Notice that $X$ is not required to be connected.

\begin{corollary}
\label{quasi-equal}
Let $X$ be an Ahlfors regular metric measure space of dimension $d$ and finite measure. Then
there exist positive constants $c_{3},c_{4}$ such that for every positive
integer $N$ there is a partition of $X$ into $N$ regions each contained in a
ball of radius $c_{3}N^{-1/d}$ and containing a ball of radius $c_{4}N^{-1/d}$.
\end{corollary}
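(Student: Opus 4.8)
The plan is to read the $N$ regions off the David--Christ dyadic cubes supplied by Theorem~\ref{DC-thm-dyadic-cubes}: pick the dyadic level at which the cubes already sit at scale $N^{-1/d}$, and then merge cubes that share a common parent to bring the count down to \emph{exactly} $N$. Write $M_k:=\lvert I_k\rvert$ for the number of level-$k$ cubes, and fix $\delta,a_0,a_1$ as in Definition~\ref{DC-dyadic-cube-props-def}. Two preliminary facts are needed. First, since $\mu(B(x,r))\geqslant c_1 r^d$ and $\mu(X)<\infty$ force $\operatorname{diam}(X)<\infty$, and since \eqref{AR-ball-bounds} combined with \eqref{DC-prop-inner-ball}--\eqref{DC-prop-outer-ball} gives $c_1 a_0^d\delta^{kd}\leqslant\mu(Q_\alpha^k)\leqslant c_2 a_1^d\delta^{kd}$, summing over the essentially disjoint cubes of a single level yields $M_k\,\delta^{kd}\asymp\mu(X)$; hence each $M_k$ is finite, $M_k$ is non-decreasing in $k$, and $M_k\to\infty$. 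Second, by \eqref{DC-prop-2}--\eqref{DC-prop-3} every cube $Q_\beta^k$ lies in a unique level-$(k-1)$ parent, and comparing the measure of a parent with those of its children bounds the number of children by $L:=c_2 a_1^d/(c_1 a_0^d\delta^d)$, a constant independent of $k$; in particular $M_{k-1}\leqslant M_k\leqslant L\,M_{k-1}$.

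I would then fix the level. Let $k$ be the smallest integer with $M_k\geqslant N$, which exists because $M_k\to\infty$. Then $M_{k-1}<N\leqslant M_k$, and $M_k\leqslant L M_{k-1}<LN$ gives $N\leqslant M_k<LN$. Substituting $M_k\asymp N$ into $M_k\delta^{kd}\asymp\mu(X)$ shows $\delta^{kd}\asymp\mu(X)/N$, so that $C_2 N^{-1/d}\leqslant\delta^k\leqslant C_1 N^{-1/d}$ for constants $C_1,C_2$ depending only on $d,\delta,a_0,a_1,c_1,c_2,L,\mu(X)$. Thus the level-$k$ cubes already have the right scale: each is trapped between an inner ball of radius $a_0\delta^k$ and an outer ball of radius $a_1\delta^k$, both comparable to $N^{-1/d}$.

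It remains to turn the $M_k\geqslant N$ cubes into exactly $N$ regions while preserving the scale, and here I would merge only cubes that share a level-$(k-1)$ parent. Merging all children of every parent gives $M_{k-1}$ regions, merging none gives $M_k$, and since each single merge drops the count by one, every integer in $[M_{k-1},M_k]$ is attainable; as $M_{k-1}<N\leqslant M_k$ we may stop precisely at $N$. Each region is then a union of children of one parent $Q_\alpha^{k-1}$, hence is contained in $B(z_\alpha^{k-1},a_1\delta^{k-1})$, a ball of radius $a_1\delta^{k-1}=(a_1/\delta)\delta^k$, and still contains at least one whole child, hence a ball $B(z_\beta^k,a_0\delta^k)$ of radius $a_0\delta^k$. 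Taking $c_3:=(a_1/\delta)\,C_1$ and $c_4:=a_0\,C_2$ gives the claimed bounds; the finitely many small values of $N$, where $\delta^k$ is comparable to $\operatorname{diam}(X)$, are absorbed by enlarging $c_3$ and shrinking $c_4$. As stated in the corollary, connectedness of $X$ plays no role.

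I expect the last step to be the crux: matching the region count to $N$ exactly while keeping every region at scale $N^{-1/d}$. Both are governed by the uniform branching bound $L$, which simultaneously guarantees that the level giving scale $N^{-1/d}$ overshoots the target count by at most the factor $L$ (so $M_k\asymp N$), and that merging inside a parent never inflates a region past the parent's scale $\delta^{k-1}\asymp\delta^k\asymp N^{-1/d}$.
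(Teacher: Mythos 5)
Your proposal is correct and follows essentially the same route as the paper: both select the dyadic generation at which the cube count brackets $N$ and then form the $N$ regions as unions of finer cubes grouped by a common coarser ancestor, reading off the outer ball from the ancestor and the inner ball from a retained fine cube. The only cosmetic difference is that the paper descends a fixed number of extra generations (so that each coarse cube contains at least $H$ fine cubes) rather than using two consecutive generations as you do; the counting and the radius bounds come out the same way.
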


\begin{proof}
For any integer $n$ such that $a_0\delta^n\leqslant\operatorname{diam}(X),$ let $N_{n}$ be the cardinality of the set $I_{n}$, that is
the number of cubes of generation $n$ within $X.$ By the Ahlfors regularity of $X$ and
the properties of the dyadic cube partition,
\begin{align*}
N_{n}c_{1}\left(  a_{0}\delta^{n}\right)  ^{d}
&\leqslant\sum_{\alpha\in I_{n}} \mu\left( B \left( z_{\alpha}^{n},a_{0}\delta^{n} \right) \right)
\leqslant\mu\left( X \right)  
\\
&\leqslant\sum_{\alpha\in I_{n}} \mu\left( B \left( z_{\alpha}^{n},a_{1}\delta^{n} \right) \right)  
\leqslant N_{n} c_{2} \left( a_{1}\delta^{n} \right) ^{d},
\end{align*}
so that
\[
\frac{\mu\left(  X\right)  }{c_{2}a_{1}^{d}\delta^{nd}}\leqslant N_{n}\leqslant\frac
{\mu\left(  X\right)  }{c_{1}a_{0}^{d}\delta^{nd}}.
\]
Let us now take the integer $n$ such that
\[
N_{n}\leqslant N<N_{n+1}.
\]
Observe that
\[
\frac{N}{N_{n}}<\frac{N_{n+1}}{N_{n}}\leqslant\frac{\mu\left(  X\right)  }%
{c_{1}a_{0}^{d}\delta^{nd+d}}\frac{c_{2}a_{1}^{d}\delta^{nd}}{\mu\left(
X\right)  }=\frac{c_{2}a_{1}^{d}}{c_{1}a_{0}^{d}\delta^{d}}=:H.
\]
Let now $k$ be any  integer such that
\[
Hc_2a_{1}^d\delta^{kd}\leqslant c_1a_{0}^d.
\]
Since every cube of generation $n+k$ has measure bounded above
by $c_2 a_1^d\delta^{(n+k)d},$ while every cube of generation $n$ has measure bounded 
below by $c_1 a_0^d\delta^{nd},$
it follows that every cube $Q_{\alpha}^{n}$ 
contains at least $H$ cubes of generation $n+k,$ each containing a ball of
radius
\[
a_{0}\delta^{n+k}\geqslant\frac{a_{0}\delta^{k}}{a_{1}}\left(  \frac{\mu\left(
X\right)  }{c_{2}N_{n}}\right)  ^{1/d}\geqslant\frac{a_{0}\delta^{k}}{a_{1}}\left(
\frac{\mu\left(  X\right)  }{c_{2}}\right)  ^{1/d}\frac{1}{N^{1/d}}.
\]
Furthermore, every cube $Q_{\alpha}^{n}$  is contained in a ball of radius
\[
a_{1}\delta^{n}=\frac{a_{1}}{\delta}\delta^{n+1}\leqslant\frac{a_{1}}{\delta a_{0}%
}\left(  \frac{\mu\left(  X\right)  }{c_{1}N_{n+1}}\right)  ^{1/d}\leqslant
\frac{a_{1}}{\delta a_{0}}\left(  \frac{\mu\left(  X\right)  }{c_{1}}\right)
^{1/d}\frac{1}{N^{1/d}}%
\]
By taking unions of cubes of generation $n+k$ contained in the same cube of
generation $n$, divide $X$ into exactly $N$ regions satisfying the required
properties. This can be done because there are $N_{n}\leqslant N$ cubes of
generation $n$ and at least $N_{n}H> N$ cubes of generation $n+k$.
\end{proof}

The construction of dyadic cubes as per Definition \ref{DC-dyadic-cube-props-def}
ensures that each cube of a fixed generation is contained in a ball of
a fixed radius \eqref{DC-prop-outer-ball}.
A similar property also holds for each cube and its immediate
neighbours in a fixed generation, in the following sense.
\begin{corollary}
\label{DC-big-ball-cor} For all $k$ and all $\beta\in I_{k},$ the union of cubes
\begin{equation}
\label{DC-union-of-cubes-def}
Q_{\beta}^{k}\cup\bigcup_{B(z_{\alpha}^{k},a_1\delta^{k})
\cap 
B(z_{\beta}^{k},a_1\delta^{k})\neq\emptyset}Q_{\alpha}^{k}%
\end{equation}
is contained in the ball $B(z_{\beta}^{k},3  a_{1}  \delta^{k})$.
\end{corollary}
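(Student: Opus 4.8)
The plan is to show that any cube $Q_\alpha^k$ appearing in the union \eqref{DC-union-of-cubes-def} lies entirely inside the ball $B(z_\beta^k, 3a_1\delta^k)$, and then conclude that the whole union does. Since the term $Q_\beta^k$ itself is contained in $B(z_\beta^k, a_1\delta^k) \subset B(z_\beta^k, 3a_1\delta^k)$ by property \eqref{DC-prop-outer-ball}, the only work is in handling the neighbouring cubes indexed by those $\alpha$ for which $B(z_\alpha^k, a_1\delta^k) \cap B(z_\beta^k, a_1\delta^k) \neq \emptyset$.

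First I would fix such a neighbour $Q_\alpha^k$ and exploit the nonempty intersection condition. If the two balls $B(z_\alpha^k, a_1\delta^k)$ and $B(z_\beta^k, a_1\delta^k)$ share a point, say $w$, then by the triangle inequality
\[
\rho(z_\alpha^k, z_\beta^k) \leqslant \rho(z_\alpha^k, w) + \rho(w, z_\beta^k) < 2a_1\delta^k,
\]
so the two centres are within distance $2a_1\delta^k$ of each other.

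Next I would combine this with the outer-ball property \eqref{DC-prop-outer-ball} applied to $Q_\alpha^k$, namely $Q_\alpha^k \subset B(z_\alpha^k, a_1\delta^k)$. For any point $x \in Q_\alpha^k$, a further triangle inequality gives
\[
\rho(x, z_\beta^k) \leqslant \rho(x, z_\alpha^k) + \rho(z_\alpha^k, z_\beta^k) < a_1\delta^k + 2a_1\delta^k = 3a_1\delta^k,
\]
so $x \in B(z_\beta^k, 3a_1\delta^k)$, and hence $Q_\alpha^k \subset B(z_\beta^k, 3a_1\delta^k)$. Since this holds for every neighbouring cube as well as for $Q_\beta^k$ itself, the entire union is contained in $B(z_\beta^k, 3a_1\delta^k)$, as claimed.

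There is no real obstacle here: the argument is a pure metric-space computation resting entirely on the outer-ball inclusion \eqref{DC-prop-outer-ball} and two applications of the triangle inequality, and it does not even invoke Ahlfors regularity or the measure-theoretic properties of the cubes. The only point deserving mild care is that the intersection condition is stated in terms of the outer balls of radius $a_1\delta^k$ (rather than the cubes themselves), which is precisely what makes the constant work out to $3a_1\delta^k$; the factor $3$ is the sum $1 + 2$ coming from the radius of one outer ball plus the bound on the inter-centre distance.
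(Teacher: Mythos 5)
Your proof is correct and follows essentially the same route as the paper's: bound the distance between the two centres by $2a_1\delta^k$ via the overlap of the outer balls, then combine with property \eqref{DC-prop-outer-ball} and a second triangle inequality to get the radius $3a_1\delta^k$. Your write-up just spells out the pointwise details that the paper leaves implicit.
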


\begin{proof}
By the triangle inequality, the distance between two centre points $z_{\alpha
}^{k},z_{\beta}^{k}$ of the pair of overlapping balls $B(z_{\alpha}^{k}%
,a_1\delta^{k}),B(z_{\beta}^{k},a_1\delta^{k})$ is at most $2a_1\delta^{k}$. By
property \eqref{DC-prop-outer-ball} of dyadic cubes, each cube $Q_{\alpha}^{k}$
is contained in the ball $B(z_{\alpha}^{k},a_{1}\delta^{k})$, and thus, by the
triangle inequality, the union of cubes \eqref{DC-union-of-cubes-def} is
contained in the ball $B(z_{\beta}^{k},3  a_{1}  \delta^{k})$.
\end{proof}

Ahlfors regularity of $X$ implies immediately that the measure $\mu$ is
non-atomic. By a result due to Sierpinski \cite{Sier22}, non atomic measures
take all values between $0$ and $\mu\left(  X\right)  .$ The proof in the
general case is based on Zorn's lemma or functional analysis techniques (see
\cite[p. 38]{Fry04}), but in our case it follows immediately from the above
dyadic decomposition.

\begin{corollary}
\label{DC-measurable-subset-corollary} Let $X$ be an Ahlfors regular
metric measure space of dimension $d$, and let $S$ be a measurable subset of
$X$ with finite measure. Then for any $0\leqslant t\leqslant\mu(S)$, there exists a subset $T\subseteq
S$ with $\mu(T)=t$.
\end{corollary}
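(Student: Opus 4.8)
The plan is to build $T$ directly out of the dyadic cubes of Theorem~\ref{DC-thm-dyadic-cubes}, as an increasing union of finite unions of cubes intersected with $S$ whose measures climb monotonically up to $t$. Two facts drive the argument. First, for every generation $k$ the index set $I_{k}$ is finite: by \eqref{DC-prop-inner-ball} and \eqref{AR-ball-bounds} each cube satisfies $\mu(Q_{\alpha}^{k})\geqslant c_{1}(a_{0}\delta^{k})^{d}>0$, while the cubes are disjoint \eqref{DC-prop-2} and cover $X$ up to a null set \eqref{DC-prop-1}, so $\#I_{k}\leqslant\mu(X)/(c_{1}a_{0}^{d}\delta^{kd})<\infty$; moreover, for each fixed cube $Q_{\alpha}^{k}$ its generation-$(k+1)$ subcubes partition it up to a null set, whence $\sum_{Q'\subseteq Q_{\alpha}^{k}}\mu(S\cap Q')=\mu(S\cap Q_{\alpha}^{k})$. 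Second, by \eqref{DC-prop-outer-ball} and \eqref{AR-ball-bounds} every generation-$k$ cube obeys $\mu(S\cap Q_{\alpha}^{k})\leqslant\mu(Q_{\alpha}^{k})\leqslant c_{2}(a_{1}\delta^{k})^{d}=:m_{k}$, and $m_{k}\to 0$ as $k\to\infty$ since $\delta\in(0,1)$.

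Next I would run a greedy refinement down the tree of cubes while maintaining the following invariant at each generation $k$: a finite family of pairwise disjoint \emph{accepted} cubes whose union $T_{k}:=\bigcup(S\cap Q)$ satisfies $\mu(T_{k})\leqslant t$, together with a finite family of \emph{pending} generation-$k$ cubes, disjoint from the accepted ones, so that the accepted and pending cubes jointly cover $X$ up to a null set and the deficit obeys $t-\mu(T_{k})\leqslant\sum_{\text{pending }Q}\mu(S\cap Q)$. I start at some fixed generation with no accepted cubes and all cubes pending, where the invariant reduces to $t\leqslant\mu(S)$. To pass from $k$ to $k+1$, I replace each pending cube by its generation-$(k+1)$ subcubes (legitimate by the nesting property \eqref{DC-prop-3}), list these subcubes, and move them one at a time into the accepted family as long as the running total stays $\leqslant t$, stopping at the first subcube that would overshoot; the remaining subcubes become the new pending family. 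Since accepted cubes are never removed and new ones are carved from formerly pending regions, the sets $T_{k}$ are nested increasing.

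It remains to check that the invariant is preserved and that $\mu(T_{k})\to t$. Preservation of the covering and deficit conditions is a short bookkeeping computation using $\sum_{\text{children}}\mu(S\cap Q')=\mu(S\cap Q)$. The crucial gain is that, after the greedy step, the deficit $t-\mu(T_{k+1})$ is strictly smaller than the $S$-measure of a single rejected generation-$(k+1)$ cube, hence at most $m_{k+1}$; thus $t-m_{k+1}\leqslant\mu(T_{k+1})\leqslant t$. (Should the greedy ever exhaust all subcubes without overshooting, the deficit bound forces $\mu(T_{k+1})=t$ and the construction terminates.) Setting $T:=\bigcup_{k}T_{k}\subseteq S$, continuity of $\mu$ from below gives $\mu(T)=\lim_{k}\mu(T_{k})=t$, as required. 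The main obstacle is precisely the interplay exploited here: the approximating sets must be kept nested so that their union has the exact limiting measure, while the deficit must be squeezed by the measure of one cube of the current generation, which \eqref{DC-prop-outer-ball} and \eqref{AR-ball-bounds} force to zero; the finiteness of $I_{k}$ and the null boundaries are what keep the one-at-a-time selection and the additivity over subcubes rigorous.
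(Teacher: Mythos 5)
Your proposal is correct and follows essentially the same route as the paper: a greedy selection of intersections $S\cap Q_{\alpha}^{k}$ at finer and finer generations, with the remaining deficit at each stage bounded by the measure of a single rejected cube, hence by $c_{2}(a_{1}\delta^{k})^{d}\to 0$, so that countable additivity gives $\mu(T)=t$ in the limit. (The paper's bookkeeping differs only in that it jumps at each stage directly to the coarsest generation all of whose cubes fit within the current deficit rather than refining one generation at a time; also, your claim $\#I_{k}<\infty$ uses $\mu(X)<\infty$, which the corollary does not assume --- but countability of $I_{k}$ suffices for the one-at-a-time greedy, so this is a cosmetic rather than substantive issue.)
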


\begin{proof}
The result is trivial for $t=0$ and for $t=\mu(S).$ Assume therefore $0<t<\mu(S)$ and
let $T_{0}=\emptyset$, $S_{0}=S,$ and $t_{0}=t.$ For every integer $n\geqslant1,$
let $k_{n}$ be the smallest integer such that for all cubes $Q_{\alpha}%
^{k_{n}}$ of generation $k_{n}$ we have $\mu\left(  S_{n-1}\cap Q_{\alpha
}^{k_{n}}\right)  \leqslant t_{n-1} .$ Let $T_{n}$ be the union
of sets of the form $S_{n-1}\cap Q_{\alpha}^{k_{n}} $, in such a way that its
measure is less than or equal to $t_{n-1}  ,$ but if we
attach one more such set, its measure becomes greater than $t_{n-1} .$ 
Let $t_{n}=t_{n-1}-\mu\left(  T_{n}\right)  ,$ and $S_{n}=S_{n-1}\setminus T_{n}$. Notice that all the sets $T_{n}$ are
pairwise disjoint, and $t_{n}  \leqslant c_{2}a_{1}^{d}%
\delta^{k_{n}d}.$ Now let
\[
T=\cup_{n=1}^{+\infty}T_{n}.
\]
Finally, $\mu\left(  T\right)  =\sum_{n=1}^{+\infty}\mu\left(  T_{n}\right)
=\sum_{n=1}^{+\infty}\left(  t_{n-1}-t_{n}\right) =\lim_{n\rightarrow+\infty}\left(  t_{0}-t_{n}\right) 
=t  .$
\end{proof}

\section{The construction of the equal measure partition}

\label{FS-section} In this section, we describe the construction of the equal
measure partition, and state its key properties. The construction is a
modified version of the construction given in the proof of \cite[Lemma
21]{FeiS02}. The main modification is the use of David and Christ's dyadic
cubes in place of Voronoi cells. The description of the construction is
contained in the proof of the main theorem of this paper, as follows.

\begin{theorem}
\label{Feige-Schechtman-lemma} Let $(X,\rho,\mu)$ be a connected Ahlfors
regular metric measure space of dimension $d$ and finite measure. Then there
exist positive constants $c_{3}$ and $c_{4}$ such that for every sufficiently large
$N$, there is a partition of $X$ into $N$ regions of measure $\mu\left(
X\right)  /N$, each contained in a ball of radius $c_{3}N^{-1/d}$ and
containing a ball of radius $c_{4}N^{-1/d}$.
\end{theorem}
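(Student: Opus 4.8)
The plan is to improve Corollary~\ref{quasi-equal}, which already gives a partition into $N$ regions of \emph{approximately} equal measure with the correct diameter and inner-ball bounds, into one with \emph{exactly} equal measure $\mu(X)/N$, while preserving those geometric bounds. The starting point is a ``coarse'' partition into cells of a single generation $n$ chosen so that each cell has measure strictly larger than $\mu(X)/N$ (roughly, take $n$ so that there are fewer than $N$ cubes of generation $n$). The engine for transferring measure between cells will be Corollary~\ref{DC-measurable-subset-corollary}, which lets me carve out of any measurable set a subset of any prescribed measure up to its total. Connectedness of $X$, not used in Corollary~\ref{quasi-equal}, should enter precisely here, to guarantee that I can move mass between adjacent cells along a chain so that every transferred piece stays inside a controlled ball.

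The key steps, in order, would be: first, fix the generation $n$ and the refinement depth $k$ as in the proof of Corollary~\ref{quasi-equal}, so that each generation-$n$ cube $Q_\alpha^n$ contains at least $H>N/N_n$ subcubes of generation $n+k$, is contained in a ball of radius $a_1\delta^n\leqslant c_3'N^{-1/d}$, and contains an inner ball of radius comparable to $N^{-1/d}$. Second, organize the $N_n$ coarse cubes into an adjacency graph using the overlap condition of Corollary~\ref{DC-big-ball-cor}; connectedness of $X$ forces this graph to be connected, so I may process the cells in a spanning-tree order. Third, sweep through the cells: whenever a cell carries more than the target measure $\mu(X)/N$ allotted to the regions it must host, use Corollary~\ref{DC-measurable-subset-corollary} to split off exactly the required amount, and push the surplus into a neighbouring cell; because each transfer is between cubes whose defining balls overlap, the combined set lies in the ball $B(z_\beta^k,3a_1\delta^k)$ of Corollary~\ref{DC-big-ball-cor}, keeping the diameter bound intact. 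Fourth, having balanced the measures, I carve each final region out to have measure exactly $\mu(X)/N$, again by Corollary~\ref{DC-measurable-subset-corollary}, and verify that each region still contains one of the inner balls of radius $a_0\delta^{n+k}\geqslant c_4 N^{-1/d}$ guaranteed by \eqref{DC-prop-inner-ball}.

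The main obstacle I expect is bookkeeping the measure transfers so that two competing constraints are met simultaneously: each region must end with measure \emph{exactly} $\mu(X)/N$, yet must still be contained in a ball of radius $O(N^{-1/d})$ and still contain a ball of radius $O(N^{-1/d})$. The exact-measure requirement is what forces the use of Corollary~\ref{DC-measurable-subset-corollary} rather than a clean union of cubes, and the danger is that the carved-off fractional pieces, if allowed to wander, destroy the inner-ball property. The care needed is to ensure that each region retains at least one full generation-$(n+k)$ subcube as its ``core'' (which supplies the inner ball), using only the fractional subsets to top up or draw down the measure, and to confine every piece moved between cells to an overlapping pair so Corollary~\ref{DC-big-ball-cor} applies. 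Once the accounting is set up so that cores are never fragmented and surpluses travel only one adjacency step at a time, the diameter and inner-ball estimates follow from the same computations as in Corollary~\ref{quasi-equal}, and the equal-measure property follows from the exactness in Corollary~\ref{DC-measurable-subset-corollary}.
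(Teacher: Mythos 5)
Your proposal follows essentially the same route as the paper's proof: choose a coarse generation $n$ whose cubes each exceed the target measure, use connectedness to get a connected adjacency graph of overlapping enclosing balls, take a spanning tree and pass each cell's sub-$\mu(X)/N$ remainder one edge along the tree toward the root, use fine-generation dyadic cubes as nuclei supplying the inner balls, and invoke Corollary~\ref{DC-measurable-subset-corollary} to carve regions of exactly the right measure, with Corollary~\ref{DC-big-ball-cor} giving the diameter bound since every piece moves only one adjacency step. The only details to pin down beyond your sketch are quantitative (each coarse cube needs measure at least $2\mu(X)/N$ so that nuclei and remainder coexist inside it, and the fine generation must be deep enough that the at most $M$ nuclei per cell have total measure at most $\mu(X)/N$), which is exactly the bookkeeping the paper carries out.
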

 
Note that the requirement that $X$ be connected is essential, even if we give up
the condition on the small ball contained in each region. Indeed, observe first that 
$X$ is a totally bounded complete metric space, and is therefore compact. If $X$ is
not connected then it is the disjoint union of two 
open sets $Y$ and $W,$ both with positive measure by Ahlfors regularity. Clearly $Y$ and $W$ are also closed, and therefore compact. It follows that the
distance $\inf\{\rho(y,w):y\in Y,\,w\in W\}$ between them is positive, say
$\varepsilon.$ Let's now take $N$ large enough that $c_{3}N^{-1/d}%
<\varepsilon.$ If a partition as in the theorem exists for one such $N$, then
each region is contained either in $Y$ or in $W$. It follows that $\mu\left(
Y\right)  $ and $\mu\left(  W\right)  $ are integer multiples of $\mu\left(
X\right)  /N$, and this cannot be true for all such $N.$
 
\begin{proof}
Let $(X,\rho,\mu)$ be a connected Ahlfors regular metric measure space of
dimension $d$ with constants $0<c_{1}<c_{2}$ as per Definition
\ref{Ahlfors-def}. In addition, let
\[
\left\{Q_\alpha^k:k\in\mathbb Z,\,\alpha\in I_k\right\}%
\]
be a family of dyadic cubes on $X,$ as per Theorem \ref{DC-thm-dyadic-cubes},
with properties as per Definition \ref{DC-dyadic-cube-props-def}. For the sake
of simplicity, assume without loss of generality that $\mu\left(  X\right)
=1.$ We construct a partition of $X$ into $N$ regions of measure $1/N$ as follows.

\begin{enumerate}
\item \label{FS-step-max-gen} Assume%
\[
N\geqslant\frac{2}{c_{1}\delta^{d}\mathrm{diam}\left(  X\right)  ^{d}}%
\]
and let
 $n$ be the only integer such that
\begin{equation}
\label{characterization-n}a_{0}\delta^{n+1}<\left( \frac{2}{c_{1}%
N}\right) ^{1/d}\leqslant a_{0}\delta^{n}.
\end{equation}
In particular,%
\[
\mu(Q_{\alpha}^{n})\geqslant\mu(B(z_{\alpha}^{n},a_{0}\delta^{n}))\geqslant
c_{1}a_{0}^{d}\delta^{nd}\geqslant2/N
\]
for all $\alpha\in I_{n}$.

\item \label{FS-step-graph} Create a graph $\Gamma$ with a vertex
corresponding to each index $\alpha\in I_{n}$ and an edge $(\alpha,\beta)$
corresponding to each pair of centre points $z_{\alpha}^{n},\,z_{\beta}^{n}$
such that $B(z_{\alpha}^{n},a_1\delta^{n})\cap B(z_{\beta}^{n},a_1\delta^{n}%
)\neq\emptyset$. Observe that this graph is connected. Indeed, assume by
contradiction that it is not. Then $I_{n}$ can be decomposed into the disjoint
union of two sets $I_{n}^{1}$ and $I_{n}^{2},$ such that for any $\alpha\in
I_{n}^{1}$ and $\beta\in I_{n}^{2}$ we have $B(z_{\alpha}^{n},a_1\delta^{n})\cap
B(z_{\beta}^{n}a_1\delta^{n})=\emptyset.$ But then $X_{1}=\cup_{\alpha\in
I_{n}^{1}}B(z_{\alpha}^{n},a_1\delta^{n})$ and $X_{2}=\cup_{\beta\in I_{n}^{1}%
}B(z_{\beta}^{n},a_1\delta^{n})$ are two disjoint open sets that cover $X,$ and
this is a contradiction because $X$ is connected.

\item \label{FS-step-tree} Take any spanning tree $S$ of $\Gamma$
\cite[Section 6.2]{Ore62}. The tree $S$ has leaves, which are nodes having
only one edge, and either a single centre node, or a bicentre, which is a pair
of nodes joined by an edge. The centre or bicentre nodes are the nodes for
which the shortest path to any leaf has the maximum number of edges
\cite[Chapter 6, Section 9]{Rio58}. If there is a single centre, mark it as
the root node. If there is a bicentre, arbitrarily mark one of the two nodes
as the root node.

Create the directed tree $T$ from $S$ by directing the edges from the leaves
towards the root \cite[Chapter 6, Section 7]{Rio58}. In other words,
$(\alpha,\beta)\in T$ means that $T$ contains an edge directed from the child
node $\alpha$ towards the parent node $\beta$.

\item \label{FS-step-M} Set
\[
M:=\frac{2c_{2}\left(  3a_{1}\right)  ^{d}}{c_{1}(\delta a_{0})^{d}}.
\]
By Corollary \ref{DC-big-ball-cor}, for all $\beta\in I_{n},$%
\[
Q_{\beta}^{n}\cup\bigcup_{(\alpha,\beta)\in T}Q_{\alpha}^{n}\subset B\left(
z_{\beta}^{n},  3a_{1}  \delta^{n}\right)  ,
\]
so that%
\[
\mu\left(  Q_{\beta}^{n}\cup\bigcup_{(\alpha,\beta)\in T}Q_{\alpha}%
^{n}\right)  \leqslant\mu\left(  B\left(  z_{\beta}^{n},3 a_{1}
\delta^{n}\right)  \right)  \leqslant c_{2}\left(  3a_{1}\delta^n\right)  ^{d},
\]
but by \eqref{characterization-n}%
\begin{equation}
\label{delta-bound}
\delta^{nd}<\frac2{ c_1 \left(\delta a_{0}\right)^dN},
\end{equation}
and therefore
\begin{equation}
\label{en-beta}
\mu\left(  Q_{\beta}^{n}\cup\bigcup_{(\alpha,\beta)\in T}Q_{\alpha}%
^{n}\right)  <\frac{2c_{2}\left( 3 a_{1}\right)  ^{d}}{c_1\left(\delta a_{0}\right)^d N}=\frac{M}{N}.
\end{equation}

\item
\label{FS-step-m} Let $m:=n+k,$ where $k$ is a positive integer such that
$
\delta^{k}\leqslant 3M^{-2/d}.
$
By \eqref{delta-bound}, this ensures that for all $\eta\in I_{m}$
\begin{equation}
\mu(Q_{\eta}^{m})
\leqslant
\mu\left(  B\left(  z_{\eta}^{m},a_{1}\delta ^{m}\right)  \right)  
\leqslant 
c_{2}\left(  a_{1}\delta^{n+k}\right)^{d}
< 
\frac{2 c_{2}a_{1}^{d}\delta^{kd}}{c_1\left(\delta a_{0}\right)^dN}
\leqslant\frac{1}{MN}.
\label{small-cubes}%
\end{equation}

\item
\label{FS-step-leaf}For each leaf node $\beta$, define $N_{\beta}:=\lfloor
\mu(Q_{\beta}^n)N\rfloor.$ This is the maximum number of sets of measure
$1/N$ that fit inside $Q_{\beta}^{n}$. Choose $N_{\beta}$ cubes of generation
$m$ inside $Q_{\beta}^{n}$ to be the nuclei of each region of the partition.
This can be done because the bounds \eqref{en-beta} and \eqref{small-cubes}
ensure that the total measure of the $N_{\beta}$ cubes is at most $1/N$, while
$Q_{\beta}^{n}$ has measure at least $2/N$.\emph{ }By Corollary
\ref{DC-measurable-subset-corollary}, extend each nucleus into a region of measure
$1/N$ within $Q_{\beta}^{n}$. Call $W_{\beta}$ the remainder set inside
$Q_{\beta}^{n}$. Clearly $\mu\left(  W_{\beta}\right)  <1/N$.
For future reference, for leaf nodes $\beta$ we will set $X_\beta:=Q_\beta^n.$

\item \label{FS-step-nonleaf} For each nonleaf node $\beta$ other than the
root, define $X_{\beta}:=Q_{\beta}^{n}\cup\bigcup_{(\alpha,\beta)\in
T}W_{\alpha}$, that is, we add all the remainders of the children of $\beta$
in $T$ to $Q_{\beta}^{n}$ to obtain $X_{\beta}$. We can recursively assume
that if $\left(  \alpha,\beta\right)  \in T$, then $W_{\alpha}\subset
Q_{\alpha}^{n}.$ Since $X_{\beta}\subset Q_{\beta}^{n}\cup\bigcup
_{(\alpha,\beta)\in T}Q_{\alpha}^{n}$, then $\mu\left(  X_{\beta}\right)  \leqslant
M/N.$ Now define $N_{\beta}:=\lfloor\mu(X_{\beta})N\rfloor$ and once again
choose $N_{\beta}$ cubes of generation $m$ inside $Q_{\beta}^{n}$ to be the
nuclei of each region of the partition. This can be done because the bounds
\eqref{en-beta} and \eqref{small-cubes} ensure that the total measure of the
$N_{\beta}$ cubes is at most $1/N$, while $Q_{\beta}^{n}$ has measure at least
$2/N$. By Corollary \ref{DC-measurable-subset-corollary}, take a subset
$W_{\beta}$ of $Q_{\beta}^{n},$ disjoint from the previously chosen nuclei, of
measure $\mu\left(  X_{\beta}\right)  -N_{\beta}/N<1/N$.\emph{ }This is again
possible because $\mu(Q_{\beta}^{n})\geqslant2/N$ and the total measure of the
nuclei is at most $1/N$.\emph{ }Finally, once again by Corollary
\ref{DC-measurable-subset-corollary}, extend each nucleus into a region of measure
$1/N$ within $X_{\beta}\setminus W_{\beta}$.

\item \label{FS-step-root} For the root node $\gamma$, we define $X_{\gamma
}:=Q_{\gamma}^{n}\cup\,\bigcup_{(\beta,\gamma)\in T}W_{\beta}$. We have
$\mu(X_{\gamma})=N_{\gamma}/N$, where
\[
N_{\gamma}:=N-\sum_{\alpha\neq\gamma}N_{\alpha}%
\]
is an integer. Since $X_{\gamma}\subset Q_{\gamma}^{n}\cup\bigcup
_{(\beta,\gamma)\in T}Q_{\gamma}^{n}$, then $\mu\left(  X_{\gamma}\right)
\leqslant M/N.$ Once again choose $N_{\gamma}$ cubes of generation $m$ inside
$Q_{\gamma}^{n}$ to be the nuclei of each region of the partition. This can
be done because the bounds \eqref{en-beta} and \eqref{small-cubes} ensure that
the total measure of the $N_{\gamma}$ cubes is at most $1/N$, while $Q_{\gamma
}^{n}$ has measure at least $2/N$.\emph{ }Finally, once again by Corollary
\ref{DC-measurable-subset-corollary}, extend each nucleus into a region of measure
$1/N$ within $X_{\gamma}.$
\end{enumerate}

By the definition of the graph $\Gamma$ used to create the tree $T$, if
$(\alpha,\beta)\in T$ then the two balls $B(z_{\alpha}^{n},a_1\delta^{n})$ and
$B(z_{\beta}^{n},a_1\delta^{n})$ overlap. Therefore, by Corollary
\ref{DC-big-ball-cor}, the union of cubes $Q_{\beta}^{n}\cup\bigcup
_{(\alpha,\beta)\in T}Q_{\alpha}^{n}$ is contained in the ball $B(z_{\beta
}^{n}, 3a_{1}  \delta^{n})$. Since steps \ref{FS-step-leaf},
\ref{FS-step-nonleaf} and \ref{FS-step-root} of the construction have ensured
that
\[
X_{\beta}\subset Q_{\beta}^{n}\cup\bigcup_{(\alpha,\beta)\in T}Q_{\alpha
}^{n},
\]
and that each region is contained in
some $X_{\beta}$, each region is therefore contained in a ball of radius
$3a_{1} \delta^{n}$. But by \eqref{delta-bound}
\[
\delta^{n}<\frac 1{\delta a_0}\left(  \frac{2}{c_{1}N}\right)  ^{1/d},
\]
so that each region is contained in a ball of radius $c_{3}N^{-1/d}$, where
\[
c_{3}:=\frac{3a_1}{\delta a_0}  \left(  \frac{2}{c_{1}}\right)  ^{1/d}.
\]

Steps \ref{FS-step-leaf},
\ref{FS-step-nonleaf} and \ref{FS-step-root} of the construction also ensure that each region
contains a dyadic cube $Q_{\eta}^{m}$ of generation $m=n+k$. By property
\eqref{DC-prop-inner-ball} of Definition \ref{DC-dyadic-cube-props-def} this
implies that each region contains a ball $B(z_{\eta}^{m},a_{0} \delta^{m})$ of
radius $a_{0} \delta^{m}$. From \eqref{characterization-n}, we have
\begin{align*}
\delta^{m}  &  
= 
\delta^{n} \delta^{k} 
\geqslant
 \left(  \frac{2}{c_{1} a_{0}^{d} N}\right)  ^{1/d} {\delta}^k,
\end{align*}
so that each region contains a ball of radius $c_{4} N^{-1/d}$, where
\begin{align*}
c_{4}  &  :=  \left(  \frac{2}{c_{1}}\right)  ^{1/d}{\delta^k} .
\end{align*}
\end{proof}

As mentioned in the Introduction, in the original argument of Feige and Schechtman for the case of the sphere or a manifold, one can start with a saturated packing of balls of radius $r=(c_1 N)^{-1/d}$, and then use the Voronoi cells relative to the centres of the balls as ``pre-regions'' to be split into regions of measure $1/N$ plus a remainder. Unfortunately, in the case of an Ahlfors regular space, two Voronoi cells
may overlap on a set of non-zero measure and the argument cannot be applied. 

Consider for example the set $X:=\{(x,y)\in\mathbb R^2:xy=0, |x|\leqslant1,|y|\leqslant1\}$ with distance $\rho((x_1,y_1),(x_2,y_2)):=\max(|x_1-x_2|,|y_1-y_2|),$ and the $1$-dimensional Hausdorff measure. This is an Ahlfors regular space of dimension $1$. The Voronoi cells 
relative to the two points $(1,0)$ and $(-1,0)$ are $V_1:=\{(x,y)\in\mathbb R^2:xy=0, 0\leqslant x\leqslant 1,|y|\leqslant1\}$ and  $V_{-1}:=\{(x,y)\in\mathbb R^2:xy=0, -1\leqslant x\leqslant 0,|y|\leqslant1\}$,
and the $1$-dimensional Hausdorff measure of their intersection is $2$.

\section{Applications to numerical integration}\label{numerical}

The above Corollary \ref{quasi-equal} and Theorem \ref{Feige-Schechtman-lemma}
have interesting applications to numerical integration { on manifolds or more general metric spaces}. For example, in \cite[Corollary 2.15]{BCCGST}
the following result is proven. Let $X$ be a smooth compact $d$-dimensional Riemannian manifold without boundary, with Riemannian distance $\rho$ and Riemannian measure $\mu$. For any $1\leqslant p\leqslant+\infty,$  $\alpha>d/p$ and $\kappa\geqslant 1/2$,
there exists a constant $c$ such that if $\{z_j\}_{j=1}^N$ is a distribution
of points on $X$ with
\begin{equation*}
\label{mesh-separation-ratio}
\frac{\sup_{x\in X}\min_{j}\rho(x,z_j)}{\min_{i\neq j}\rho(z_i,z_j)}\leqslant \kappa
\end{equation*}
then there exist positive weights $\{\omega_j\}_{j=1}^N$ such that
\[
\left|\sum_{j=1}^N\omega_jf(z_j)-\int_Xf(x)d\mu(x) \right|\leqslant c N^{-\alpha/d}\left\|f\right\|_{W^{\alpha,p}}
\]
for all functions $f\in W^{\alpha,p}.$ Here $W^{\alpha,p}$ denotes the Sobolev class of functions $f$ with $(I+\Delta)^{\alpha/2}f\in L^p(X).$
If we take the nodes $\{z_j\}_{j=1}^N$ to be exactly the centres of the inner balls of each of the $N$ regions of Corollary \ref{quasi-equal} or Theorem \ref{Feige-Schechtman-lemma}, then clearly 
\begin{equation*}
\frac{\sup_{x\in X}\min_{j}\rho(x,z_j)}{\min_{i\neq j}\rho(z_i,z_j)}\leqslant
\frac{2c_3 N^{-1/d}}{2 c_4 N^{-1/d}}=\frac{c_3}{c_4},
\end{equation*}
and we obtain a quadrature rule for every $N,$ with optimal decay exponent $-\alpha/d$ of the error on Sobolev classes (see \cite[Corollary 2.15 and Theorem 2.16]{BCCGST} for the details).

{ After the observations of Mandelbrot that nature can often be represented more faithfully
by structures more general than manifolds, in the last decades there has been an increasing interest in analysis on fractals (see R. Strichartz's survey \cite{Str99} and the references therein, especially J. Kigami's book \cite{Kig01}).

In this context, numerical integration also plays a useful role, for example, in the search of approximate solutions to fractal differential equations 
by means of analogs of the finite-element method (see again \cite[Page 1204]{Str99}). Observe that in \cite{SU00} Strichartz and Usher 
develop efficient numerical integration methods on certain types of fractals, analogous to Simpson's method.

An application of Corollary \ref{quasi-equal} and Theorem \ref{Feige-Schechtman-lemma} to general metric measure spaces
can be found in \cite{BCCGT}.} 
There the authors study the error of numerical integration on metric measure spaces adapted to a partition of the space into $N$ disjoint subsets having diameter approximately $N^{-1/d}$ and measure approximately $N^{-1}$.  Without going into details, integrals are approximated with Riemann sums with nodes randomly taken from each region of the partition, and weights 
equal to the measure of the subsets.  Once again, Corollary \ref{quasi-equal} and Theorem \ref{Feige-Schechtman-lemma} guarantee the existence of one such decomposition under appropriate hypotheses
on the metric measure space.
{ For example, the Sierpinski gasket is a connected Ahlfors regular metric measure space of dimension $d=\log 3/\log 2$ (see \cite{DavS97}), and Theorem \ref{Feige-Schechtman-lemma} applies}.
In particular, the decomposition obtained with Theorem \ref{Feige-Schechtman-lemma} provides quadrature rules with weights equal to $1/N.$

\section{Final remarks}
{
The above results can be slightly generalized to the case where the measure of the balls is controlled above and below by some
function $\Phi(r)$ of the radius that satisfies a doubling condition, as in \cite{Rez15}. More precisely,
\begin{theorem}
\label{CD-extension}
Let $(X,\rho,\mu)$ be a connected metric measure space with finite measure. 
Assume that  $\Phi$ is a continuous non-negative strictly increasing function on $(0,+\infty)$
such that $\Phi(r)\to 0$ as $r\to0^+$ and satisfying the strict doubling property,
i.e. for some constants $b_1,b_2>1$ and  $r_0>0$ and for any $r\leqslant r_0$ it holds that
\[
b_1\Phi(r)\leqslant\Phi(2r)\leqslant b_2\Phi(r).
\]
Assume also that there exist positive constants $c_1,c_2$ such that 
for all $x\in X$ and for all $r\leqslant r_0$,
\[
c_1\Phi(r)\leqslant\mu(B(x,r))\leqslant c_2\Phi(r).
\]
Then there
exist positive constants $c_{3}$ and $c_{4}$ such that for every sufficiently large
$N$, there is a partition of $X$ into $N$ regions of measure $\mu\left(
X\right)  /N$, each contained in a ball of radius $c_{3}\Phi^{-1}(\mu(X)/N)$ and
containing a ball of radius $c_{4}\Phi^{-1}(\mu(X)/N)$.
\end{theorem}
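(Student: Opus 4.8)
The plan is to run the construction in the proof of Theorem~\ref{Feige-Schechtman-lemma} essentially verbatim, replacing the power law $r^{d}$ by $\Phi(r)$ and using the doubling property to control $\Phi$ and $\Phi^{-1}$ across scales differing by a fixed multiplicative factor. First I would observe that the hypotheses force $\mu$ to be doubling at small scales: for $2r\leqslant r_{0}$,
\[
\mu(B(x,2r))\leqslant c_{2}\Phi(2r)\leqslant c_{2}b_{2}\Phi(r)\leqslant\frac{c_{2}b_{2}}{c_{1}}\mu(B(x,r)),
\]
so $(X,\rho,\mu)$ is a space of homogeneous type and Christ's dyadic cubes of Theorem~\ref{DC-thm-dyadic-cubes} exist, with the geometric properties \eqref{DC-prop-inner-ball}--\eqref{DC-prop-outer-ball} unchanged. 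Since Corollaries~\ref{DC-big-ball-cor} and~\ref{DC-measurable-subset-corollary} use only those geometric properties and the fact that $\mu$ is non-atomic, both remain valid here.

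Next I would record two consequences of strict doubling that take over the role previously played by homogeneity. For any fixed $\lambda\geqslant1$ there is a constant $C_{\lambda}$ with $\Phi(\lambda s)\leqslant C_{\lambda}\Phi(s)$ for small $s$ (choose $j$ with $\lambda\leqslant2^{j}$ and iterate the upper doubling bound, so $C_{\lambda}\leqslant b_{2}^{\,j}$); dually there is $\theta\in(0,1)$ with $\Phi(\delta^{k}s)\leqslant\theta^{\,k}\Phi(s)$ (fix $k_{0}$ with $\delta^{k_{0}}\leqslant1/2$ and iterate the lower bound $\Phi(s/2)\leqslant\Phi(s)/b_{1}$). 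The same reasoning applied to $\Phi^{-1}$ shows that $\Phi^{-1}(2/(c_{1}N))$ and $\Phi^{-1}(1/N)$ differ only by a constant factor.

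With these in hand I would assume $\mu(X)=1$ and take $n$ to be the largest integer with $a_{0}\delta^{n}\geqslant\Phi^{-1}(2/(c_{1}N))$, the analogue of \eqref{characterization-n}; then $\mu(Q_{\alpha}^{n})\geqslant c_{1}\Phi(a_{0}\delta^{n})\geqslant2/N$. Building the graph $\Gamma$, a spanning tree, and the directed tree $T$ exactly as in Steps~\ref{FS-step-graph}--\ref{FS-step-tree} (the connectedness of $\Gamma$ uses the connectedness of $X$ word for word), Corollary~\ref{DC-big-ball-cor} together with the $C_{\lambda}$ estimate for $\lambda=3a_{1}/(\delta a_{0})$ gives
\[
\mu\Bigl(Q_{\beta}^{n}\cup\bigcup_{(\alpha,\beta)\in T}Q_{\alpha}^{n}\Bigr)\leqslant c_{2}\Phi(3a_{1}\delta^{n})\leqslant\frac{M}{N},\qquad M:=\frac{2c_{2}C_{\lambda}}{c_{1}},
\]
the analogue of \eqref{en-beta}. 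The $\theta^{\,k}$ estimate then lets me fix a constant $k$, independent of $N$, so large that the generation-$m$ cubes ($m:=n+k$) satisfy $\mu(Q_{\eta}^{m})\leqslant1/(MN)$, the analogue of \eqref{small-cubes}. From here Steps~\ref{FS-step-leaf}--\ref{FS-step-root} --- the allocation of nuclei, the remainders $W_{\beta}$, and the extension of each nucleus to measure $1/N$ by Corollary~\ref{DC-measurable-subset-corollary} --- carry over unchanged. Each region then lies in a ball of radius $3a_{1}\delta^{n}$ and contains a ball of radius $a_{0}\delta^{n+k}$, and the constant-factor comparison between $\Phi^{-1}(2/(c_{1}N))$ and $\Phi^{-1}(1/N)=\Phi^{-1}(\mu(X)/N)$ converts these into the claimed bounds $c_{3}\Phi^{-1}(\mu(X)/N)$ and $c_{4}\Phi^{-1}(\mu(X)/N)$.

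The only genuinely new point, and the step I would verify most carefully, is the passage between scales: every place where the original argument silently used $(\lambda r)^{d}=\lambda^{d}r^{d}$ must now be justified through the doubling inequalities, so that the resulting constants $M$, $k$, $c_{3}$, $c_{4}$ are truly independent of $N$. Establishing the small-scale doubling of $\mu$ (hence the existence of the dyadic cubes) is the other point to confirm; everything else, namely the tree combinatorics and the repeated appeals to the non-atomic subset lemma, is identical to the proof of Theorem~\ref{Feige-Schechtman-lemma}.
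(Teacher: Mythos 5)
Your proposal is correct and follows essentially the same route as the paper, which only sketches this proof: it likewise notes that the hypotheses make $\mu$ doubling so that Christ's dyadic cubes and Corollaries~\ref{DC-big-ball-cor} and~\ref{DC-measurable-subset-corollary} still apply, and then reruns the construction of Theorem~\ref{Feige-Schechtman-lemma} with $\Phi$ in place of $r^{d}$. In fact you supply more detail than the paper does, in particular the explicit doubling-based comparisons $\Phi(\lambda s)\leqslant C_{\lambda}\Phi(s)$ and $\Phi(\delta^{k}s)\leqslant\theta^{k}\Phi(s)$ that replace homogeneity, which the paper leaves to the reader.
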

We sketch the proof of this result. Observe first that 
Christ's dyadic decomposition holds under the weaker hypotheses that $\rho$ { is} a metric
(actually, it suffices that $\rho$ be a quasi-metric such that the balls $\{x:\rho(x,y)<r\}$ are open)
and that $\mu$ { is} a doubling measure, i.e. $\mu(B(x,2r))\leqslant c\mu(B(x,r)).$ 
This immediately allows to extend Corollaries \ref{DC-big-ball-cor} and \ref{DC-measurable-subset-corollary}
to the new general context described in Theorem \ref{CD-extension}. The construction of the partition follows the lines of the Ahlfors regular case,
with a few slight modifications that we leave to the reader.
}

{\bf Acknowledgements.}
{ The authors wish to thank Alexander Reznikov for suggesting to study the extension now contained in Theorem \ref{CD-extension}.}
Thanks to the Universit\`a degli studi di Bergamo for hosting the second author for a one month visit,
funded by the project ``Progetto ITALY\textregistered - Azione 3: Grants for Visiting Professor and Scholar''.
Part of the work was conducted while the second author was attending the  
Program on ``Minimal Energy Point Sets, Lattices, and Designs'' at the Erwin Schr\"odinger International Institute
for Mathematical Physics, 2014, as a Visiting Fellow of the Australian National University.
The following Pozible supporters made this attendance possible:
S.v.A., A.D., C.F., O.F., K.M., J.P., A.H.R., R.R., E.S., M.T.;
Yvonne Barrett, Angela M. Fearon, Sally Greenaway, Dennis Pritchard, Susan Shaw, Bronny Wright;
Naomi Cole;
the Russell family, Jonno Zilber;
Jennifer Lanspeary, Vikram.


\end{document}